\newtheorem{thm}{Theorem}[section]
\newtheorem{cor}[thm]{Corollary}
\newtheorem{lem}[thm]{Lemma}
\newtheorem{prop}[thm]{Proposition}
\newtheorem{cnj}[thm]{Conjecture}
\theoremstyle{mydefinition}
\theoremstyle{myremark}
\newtheorem{exa}[thm]{Example}
\title{Proof of a conjecture on graph polytope}
\author{Feihu Liu}
\address{School of Mathematical Sciences, Capital Normal University, Beijing 100048, PR China}
\email{\texttt{liufeihu7476@163.com}}
\date{\today}
\begin{document}
\maketitle

\begin{abstract}
Graph polytopes arising from vertex-weighted graphs were first introduced by B\'ona, Ju, and Yoshida.
We prove a conjecture stating that for any simple connected graph, the numerator polynomial of the Ehrhart series of its graph polytope is palindromic, using Stanley's reciprocity theorem.
Furthermore, we introduce hypergraph polytopes and establish that every simple, connected, unimodular hypergraph polytope is an integer polytope.
Additionally, for simple connected uniform hypergraph polytopes, we demonstrate that the numerator polynomial of their Ehrhart series is palindromic.
\end{abstract}

\def\D{{\mathcal{D}}}

\noindent
\begin{small}
 \emph{Mathematic subject classification}: Primary 05A15; Secondary 52B05, 05C22, 05C65.
\end{small}

\noindent
\begin{small}
\emph{Keywords}: Graph ploytope; Hypergraph ploytope; Ehrhart series; Palindromic polynomial.
\end{small}

\section{Introduction}
In this paper, we consistently denote by $\mathbb{R}$, $\mathbb{Q}$, $\mathbb{Z}$, $\mathbb{N}$, and $\mathbb{P}$ the sets of real numbers, rational numbers, integers, nonnegative integers, and positive integers, respectively.

A \emph{graph} $G = (V, E)$ consists of a vertex set $V$ and an edge set $E$, where each edge is an unordered pair of distinct vertices. Throughout, we assume $G$ is simple (no loops or multiple edges). For a graph with $k$ vertices, we denote the vertices of $G$ by $v_1, v_2, \ldots, v_k$; that is, the vertex set is $V = \{v_1, v_2, \ldots, v_k\}$. When vertices $v_i$ and $v_j$ are adjacent, we write $v_iv_j \in E$ (an abbreviation for $\{v_i, v_j\}\in E$) for the corresponding edge.

Given a nonnegative integer $t$ and a simple graph $G = (V, E)$, B\'ona, Ju, and Yoshida~\cite{BonaJu07} enumerated the set
\[
W(G,t) = \left\{ (n_1,\ldots,n_k) \Big| \begin{array}{l}
n_i \in \{0,1,\ldots, t\} \text{ for all } 1 \leq i \leq k, \\
\text{and } n_i + n_j \leq t \text{ if } v_iv_j \in E
\end{array} \right\}.
\]
Elements of $W(G,t)$ are called \emph{vertex weighted graphs}. The real solution set defined by these inequalities forms the $t$-th dilation of a rational convex polytope determined by $G$. Specifically, the \emph{graph polytope} $P(G)$ is defined as
\[
P(G) = \left\{ (n_1,\ldots,n_k) \Big| \begin{array}{l}
0 \leq n_i \leq 1 \text{ for all } 1 \leq i \leq k, \\
\text{and } n_i + n_j \leq 1 \text{ if } v_iv_j \in E
\end{array} \right\}.
\]
Computational results on volumes of graph polytopes for various graph families were obtained by Ju, Kim, and Lee~\cite{JuKim18}.

We now recall fundamental concepts regarding polytopes. A \emph{polytope} $\mathcal{P} \subset \mathbb{R}^d$ is the convex hull of finitely many points. Its \emph{dimension}, denoted $\dim(\mathcal{P})$, is the dimension of its affine hull. A polytope is \emph{rational} if all vertices have rational coordinates, and \emph{integral} if all vertices have integer coordinates.

For a polytope $\mathcal{P} \subseteq \mathbb{R}^d$ and $t \in \mathbb{N}$, the \emph{Ehrhart function} $\mathrm{ehr}(\mathcal{P}, t)$ counts integer points in its $t$-th dilation:
\[
\mathrm{ehr}(\mathcal{P},t) = \left| t\mathcal{P} \cap \mathbb{Z}^d \right|, \quad \text{where} \quad t\mathcal{P} = \{ t \boldsymbol{\alpha} \mid \boldsymbol{\alpha} \in \mathcal{P} \}.
\]
The associated generating function
\[
\mathrm{Ehr}(\mathcal{P},x) = \sum_{t = 0}^{\infty} \mathrm{ehr}(\mathcal{P}, t)  x^t
\]
is the \emph{Ehrhart series} of $\mathcal{P}$. For the graph polytope $P(G)$, it follows immediately from the definitions that $\mathrm{ehr}(P(G), t) = |W(G, t)|$, i.e., the cardinality of $W(G,t)$.

When $\mathcal{P}$ is an integral polytope, Ehrhart~\cite{Ehrhart62} established that $\mathrm{ehr}(\mathcal{P},t)$ is a polynomial in $t$ of degree $d$, known as the \emph{Ehrhart polynomial} of $\mathcal{P}$. In this case, the Ehrhart series takes the form
\[
\mathrm{Ehr}(\mathcal{P},x) = \frac{h^{*}(x)}{(1-x)^{d+1}},
\]
where $\dim\mathcal{P}=d$ and $h^{*}(x)$ is a polynomial of degree at most $d$ in $x$.
The polynomial $h^{*}(x)$ is commonly called the \emph{$h^{*}$-polynomial} of $\mathcal{P}$.
Stanley~\cite{StanleyADM80} demonstrated that the coefficients of $h^{*}(x)$ are always nonnegative integers.

If $\mathcal{P}$ is a rational polytope of dimension $d$, then
\[
\mathrm{ehr}(\mathcal{P},t) = c_d(t)t^d + c_{d-1}(t)t^{d-1} + \cdots + c_1(t)t + c_0(t),
\]
where the $c_i$ are periodic functions $\mathbb{Z}\rightarrow \mathbb{Q}$, forming the \emph{Ehrhart quasi-polynomial} of $\mathcal{P}$.

For $\gamma \in \mathbb{Q}^m$, define $\text{den}(\gamma)$ (the \emph{denominator} of $\gamma$) as the smallest positive integer $q$ such that $q\gamma \in \mathbb{Z}^m$. For $\gamma \in \mathbb{Q}$, this coincides with the denominator of $\gamma$ in lowest terms. For example, if $\gamma = (2, \frac{1}{3}, 0, -\frac{3}{4}, \frac{5}{6}) \in \mathbb{Q}^5$, then $\text{den}(\gamma) = 12$.

\begin{lem}\cite{Ehrhart62}\label{Ehrhartpoled+1}
Let $\mathcal{P}$ be a rational polytope of dimension $d$ in $\mathbb{R}^m$ with vertex set $\widehat{V}$. The Ehrhart series $\mathrm{Ehr}(\mathcal{P},x)$ is a rational function $\frac{R(x)}{T(x)}$ where $R(x)$ and $T(x)$ are polynomials satisfying:
\begin{enumerate}
    \item $T(x) = \prod_{\gamma \in \widehat{V}} (1 - x^{\text{den}\gamma})$ for some representation,
    \item When written in lowest terms, $\deg(R(x)) < \deg(T(x))$,
    \item $x=1$ is a pole of order exactly $d+1$,
    \item No pole has order exceeding $d+1$.
\end{enumerate}
\end{lem}
In general, $T(x) = \prod_{\gamma \in \widehat{V}} (1 - x^{\text{den}\gamma})$ is not the denominator of least degree for $\mathrm{Ehr}(\mathcal{P},x)$. However, the denominator of least degree contains a factor $(1-x)^{d+1}$ but not $(1-x)^{d+2}$, whereas $T(x)$ contains a factor $(1-x)^{|\widehat{V}|}$. Lemma~\ref{Ehrhartpoled+1} is discussed comprehensively in \cite[Theorem 4.6.8]{RP.Stanley}.

For further background on polytopes, see \cite{BeckRobins}, \cite[Chapter 4]{RP.Stanley}, and \cite{Ziegler}.

This paper resolves the following conjecture by Ju~\cite{Ju07}:
\begin{cnj}\cite[Section 4]{Ju07}\label{ConjeConnGrap1}
Let $G$ be a finite simple connected graph. The numerator polynomial of the Ehrhart series
\[
\mathrm{Ehr}(P(G),x) = \sum_{t \geq 0} |W(G,t)| x^t
\]
is symmetric (also called palindromic).
\end{cnj}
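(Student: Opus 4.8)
The plan is to derive the palindromicity from an Ehrhart reciprocity symmetry of $P(G)$, which itself comes from an explicit translation of lattice points. First, we may assume $k\ge 2$: if $k=1$ then $G$ is a single vertex, $P(G)=[0,1]$, and $\mathrm{Ehr}(P(G),x)=1/(1-x)^{2}$ has the palindromic numerator $1$. For $k\ge 2$, connectedness means every vertex has a neighbour, so for each $i$ the inequality $n_i\le 1$ follows from $n_i+n_j\le 1$ and $n_j\ge 0$ for an edge $v_iv_j$; hence
\[
P(G)=\{x\in\mathbb{R}^{k}:\ x_i\ge 0\ \text{for all }i,\ \ x_i+x_j\le 1\ \text{for all }v_iv_j\in E\}.
\]
Because $\varepsilon\mathbf{1}$, together with a small neighbourhood of it, lies in $P(G)$ for small $\varepsilon>0$, the polytope is full-dimensional, $\dim P(G)=k$, and its interior equals $\{x:\ x_i>0\ \forall i,\ x_i+x_j<1\ \forall v_iv_j\in E\}$. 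Thus the lattice points in the interior of the dilate $nP(G)$ are exactly the $m\in\mathbb{Z}^{k}$ with $m_i\ge 1$ for all $i$ and $m_i+m_j\le n-1$ for every edge $v_iv_j$.

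The key step is that the translation $m\mapsto m-\mathbf{1}$ is a bijection from this set onto $\{m'\in\mathbb{Z}^{k}:\ m'_i\ge 0\ \forall i,\ m'_i+m'_j\le n-3\ \forall v_iv_j\in E\}$, i.e.\ onto the lattice points of $(n-3)P(G)$. Writing $q(n):=\mathrm{ehr}(P(G),n)$, this gives the identity of quasi-polynomials $\mathrm{ehr}(P(G)^{\circ},n)=q(n-3)$. Combining it with Ehrhart--Macdonald reciprocity $\mathrm{ehr}(P(G)^{\circ},n)=(-1)^{k}q(-n)$ yields the functional equation
\[
q(n)=(-1)^{k}\,q(-3-n)\qquad\text{for all }n\in\mathbb{Z}.
\]
Moreover, any edge forces $m_i+m_j\ge 2$ at an interior lattice point, so the interior of $nP(G)$ is lattice-point free for $n\in\{1,2\}$, whence $q(-1)=q(-2)=0$.

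Next I pass to generating functions via the standard identity $\mathrm{Ehr}(P(G),1/x)=-\sum_{n\ge 1}q(-n)x^{n}$ (an equality of rational functions, valid for any rational polytope). Plugging in the functional equation together with $q(-1)=q(-2)=0$,
\[
\mathrm{Ehr}(P(G),1/x)=-(-1)^{k}\sum_{n\ge 1}q(n-3)x^{n}=-(-1)^{k}x^{3}\sum_{m\ge -2}q(m)x^{m}=(-1)^{k+1}x^{3}\,\mathrm{Ehr}(P(G),x).
\]
By Lemma~\ref{Ehrhartpoled+1} a denominator of $\mathrm{Ehr}(P(G),x)$ is $\prod_{\gamma\in\widehat{V}}(1-x^{\mathrm{den}\,\gamma})$, a product of cyclotomic polynomials; hence the denominator $D(x)$ in lowest terms is again a product of cyclotomic polynomials, in which $(1-x)$ occurs with multiplicity $k+1$ (the pole order at $x=1$). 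Consequently $x^{\deg D}D(1/x)=(-1)^{k+1}D(x)$. (Concretely $P(G)$ is the fractional stable-set polytope, whose vertices are half-integral, so in fact $D(x)=(1-x)^{k+1}(1+x)^{s}$ for some $s\ge 0$.) Writing $\mathrm{Ehr}(P(G),x)=R(x)/D(x)$ in lowest terms and substituting into the displayed identity, the factors $(-1)^{k+1}$ cancel, leaving $x^{\deg D-3}R(1/x)=R(x)$. Since $R(0)=D(0)\ne 0$, this forces $\deg R=\deg D-3$ and $r_i=r_{\deg D-3-i}$ for all $i$; that is, $R$ is palindromic. Finally, converting to any other admissible denominator only multiplies $R$ by a self-reciprocal polynomial, which preserves palindromicity, so the conjecture follows.

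The step I expect to require the most care is the geometric one: verifying for an arbitrary connected $G$ that $x_i\le 1$ is genuinely redundant, that the interior is cut out precisely by the strict inequalities, and --- crucially --- that the shift $m\mapsto m-\mathbf{1}$ lands exactly in $(n-3)P(G)$, so that the constant $3$ in the functional equation is uniform over all graphs. The reciprocity input, the generating-function identity, and the comparison of coefficients are then routine, provided the signs and the two low-degree terms $q(-1),q(-2)$ are tracked carefully.
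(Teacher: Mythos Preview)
Your argument is correct and lands on exactly the same functional equation as the paper, $x^{3}\,\mathrm{Ehr}(P(G),x)=(-1)^{k+1}\mathrm{Ehr}(P(G),x^{-1})$, from which palindromicity is read off against the cyclotomic denominator. The only difference is packaging: the paper introduces slack variables, passes to the homogeneous system $\Phi\mathbf{x}=0$, and applies Stanley's Reciprocity Theorem~\ref{StanleyReciprocity} using the positive solution $\beta=(1,\dots,1,3)$, while you stay at the polytope level and combine Ehrhart--Macdonald reciprocity with the translation $m\mapsto m-\mathbf{1}$; the shift by $3$ in the dilation parameter is precisely the last coordinate of $\beta$, so the two routes are the same argument in equivalent language.
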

The conjecture was confirmed for bipartite graphs (graphs without odd cycles) in \cite[Theorem 3.6]{BonaJu07}.

We also confirm the following conjecture using a result from \cite{XinZhongZhou}.
\begin{cnj}\cite[Remark 6.2]{BonaJuAnn06}\label{ConjeCircularGrap3}
Let $G$ be the graph with vertex set $V=\{1,2,\ldots, k\}$ and edge set consisting of all pairs $\{i,i+1\}$, where $k+1$ is identified with $1$. This graph $G$ is called a \emph{circular graph}.
If $\ell \in \mathbb{P}$ and $k=2\ell$, then the rational function $\mathrm{Ehr}(P(G),x)(1-x)^{2\ell+1}$ is a polynomial in $x$ of degree $2\ell-2$ with symmetric coefficients.
If $\ell \in \mathbb{P}$ and $k=2\ell+1$, then the rational function $\mathrm{Ehr}(P(G),x)(1-x)^{2\ell+2}(1+x)$ is a polynomial in $x$ of degree $2\ell$ with symmetric coefficients.
\end{cnj}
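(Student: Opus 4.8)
The plan is to derive both parts of Conjecture~\ref{ConjeCircularGrap3} from a single reciprocity identity, once the denominator of $\mathrm{Ehr}(P(G),x)$ has been pinned down. Write $G=C_k$ and $d=k=\dim P(G)$; the polytope is full dimensional since $(\frac13,\dots,\frac13)$ is an interior point. The combinatorial core is a translation bijection: for every integer $n\ge 3$ the map $x\mapsto x-(1,\dots,1)$ carries $\mathrm{int}(nP(G))\cap\Z^{k}$ bijectively onto $(n-3)P(G)\cap\Z^{k}$, and moreover $\mathrm{int}(P(G))\cap\Z^{k}=\mathrm{int}(2P(G))\cap\Z^{k}=\emptyset$. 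Indeed, an integer point $x$ lies in $\mathrm{int}(nP(G))$ exactly when $x_i\ge 1$ and $x_i+x_{i+1}\le n-1$ for all $i$ (indices read cyclically), which for $y:=x-(1,\dots,1)$ says $y_i\ge 0$ and $y_i+y_{i+1}\le n-3$; these two conditions already force $y_i\le n-3$, so the upper bounds $x_i\le n-1$ hold automatically and the two constraint systems coincide. The cases $n=1,2$ are empty because an interior integer point needs $0<x_i<n$, which is impossible for $n=1$ and forces $x_i=1$ for $n=2$, violating an edge inequality ($x_i+x_{i+1}=2$).

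From this I would extract a functional equation. By the bijection, $\sum_{n\ge 1}|\mathrm{int}(nP(G))\cap\Z^{k}|\,x^{n}=x^{3}\,\mathrm{Ehr}(P(G),x)$ as rational functions, while Ehrhart--Macdonald reciprocity gives $\sum_{n\ge 1}|\mathrm{int}(nP(G))\cap\Z^{k}|\,x^{n}=(-1)^{k+1}\mathrm{Ehr}(P(G),1/x)$; hence $x^{3}\,\mathrm{Ehr}(P(G),x)=(-1)^{k+1}\mathrm{Ehr}(P(G),1/x)$. Suppose one has written $\mathrm{Ehr}(P(G),x)=M(x)/D(x)$ where $D$ is a product of factors each self-reciprocal up to sign, so $D(1/x)=\pm x^{-\deg D}D(x)$. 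The functional equation then becomes $x^{\deg D-3}M(1/x)=\pm M(x)$, and since $M(0)=D(0)\cdot\mathrm{ehr}(P(G),0)=D(0)\ne 0$, this forces $\deg M=\deg D-3$ and $M$ self-reciprocal up to sign; a short sign check using the parity of $k$ shows the sign is $+$, so $M$ is palindromic. Thus everything reduces to identifying the correct denominator $D(x)$.

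For $k=2\ell$ this is classical: $C_{2\ell}$ is bipartite, so the coefficient matrix of the inequalities defining $P(G)$ (rows $\pm e_i$ together with $e_i+e_{i+1}$ over the edges) is totally unimodular, whence $P(G)$ is an integral polytope and, by Lemma~\ref{Ehrhartpoled+1}, $\mathrm{Ehr}(P(G),x)=h^{*}(x)/(1-x)^{2\ell+1}$ with $\deg h^{*}\le 2\ell$; taking $D(x)=(1-x)^{2\ell+1}$ (degree $2\ell+1$) in the functional equation above yields $\deg h^{*}=2\ell-2$ and $h^{*}$ palindromic (equivalently, $3P(G)$ has the single interior lattice point $(1,\dots,1)$ while $P(G),2P(G)$ have none, so $P(G)$ is Gorenstein of index $3$), which is the first assertion. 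For $k=2\ell+1$ one first checks that every vertex of $P(G)$ has denominator $1$ or $2$: a vertex is $C^{-1}b$ for a nonsingular $k\times k$ submatrix $C$ of the coefficient matrix and $b\in\{0,1\}^{k}$, and expanding $\det C$ along its $\pm e_i$ rows reduces it to the determinant of a square submatrix of the incidence matrix of $C_{2\ell+1}$, which lies in $\{0,\pm1,\pm2\}$, the value $\pm2$ occurring only for the full odd cycle. By Lemma~\ref{Ehrhartpoled+1}, $\mathrm{Ehr}(P(G),x)$ can therefore be written with denominator $(1-x)^{p}(1-x^{2})^{q}$ for suitable $p,q$, so in lowest terms its only poles are at $x=1$, of order exactly $d+1=2\ell+2$ by Lemma~\ref{Ehrhartpoled+1}, and possibly at $x=-1$. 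Here I would invoke the result of \cite{XinZhongZhou}, applied to the transfer-matrix generating function $\mathrm{Ehr}(P(C_k),x)=\sum_{n\ge0}\mathrm{tr}(A_n^{k})\,x^{n}$, where $A_n=((A_n)_{ij})_{0\le i,j\le n}$ with $(A_n)_{ij}=1$ if $i+j\le n$ and $0$ otherwise (so that $\mathrm{tr}(A_n^{k})=\#W(C_k,n)$): that result evaluates this series explicitly and in particular shows the pole at $x=-1$ has order exactly $1$. Taking $D(x)=(1-x)^{2\ell+2}(1+x)$ (degree $2\ell+3$, palindromic) in the functional equation then gives $\deg M=2\ell$ and $M$ palindromic, which is the second assertion.

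The step I expect to be the genuine obstacle is the last one, namely the exact order of the pole at $x=-1$ in the odd case --- equivalently, that the two constituents of the Ehrhart quasi-polynomial of $P(C_{2\ell+1})$ differ only by a constant --- for which the elementary arguments above give only the a priori bound $q$; this is precisely where the explicit evaluation of \cite{XinZhongZhou} is essential. Everything else --- the translation bijection, the reciprocity identity, the degree and palindromy bookkeeping, and the integral (even) case --- is routine once the dimension of $P(G)$, its integrality type, and this one pole order are in hand.
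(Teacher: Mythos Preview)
Your proposal is correct and follows essentially the same route as the paper. Both derive the key identity $x^{3}\,\mathrm{Ehr}(P(G),x)=(-1)^{k+1}\mathrm{Ehr}(P(G),x^{-1})$ (the paper via Stanley's reciprocity on the slack-variable system with shift vector $\beta=(1,\dots,1,3)$, you via Ehrhart--Macdonald reciprocity plus the translation by $(1,\dots,1)$, which is the same shift read geometrically), settle the even case by bipartiteness/integrality, and for the odd case invoke \cite{XinZhongZhou} to fix the pole at $x=-1$ at order exactly one (equivalently, $s=1$ in the paper's notation).
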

The case of line graphs (i.e., circular graphs without the edge $\{1, k\}$) is treated comprehensively in \cite{XinZhong23}.

We extend the definition of the graph polytope to hypergraphs $H=(V,E^*)$, denoting the hypergraph polytope by $P(H)$. The following results are established (see Section~\ref{Section33} for definitions).
\begin{thm}\label{UnimodularHyperG}
Let $H=(V,E^*)$ be a finite simple connected hypergraph with vertex set $V=\{v_1,v_2,\ldots, v_k\}$ and hyperedge sequence $E^*=(e_1,e_2,\ldots,e_r)$ such that $\cup_{i=1}^r e_i=V$. If $H$ is unimodular, then $P(H)$ is an integer polytope. Moreover, the denominator of $\mathrm{Ehr}(P(H),x)$ is $(1-x)^{k+1}$.
\end{thm}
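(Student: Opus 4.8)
The plan is to exhibit $P(H)$ as the solution set of a linear system whose constraint matrix is totally unimodular and whose right-hand side is integral, invoke the Hoffman--Kruskal theorem to deduce integrality, and then read off the denominator from Lemma \ref{Ehrhartpoled+1}. First I would record the defining inequalities: by the definition given in Section \ref{Section33}, $P(H)$ is the set of $(n_1,\dots,n_k)\in\R^k$ with $0\le n_i\le 1$ for all $i$ and $\sum_{v_i\in e_j}n_i\le 1$ for every hyperedge $e_j$. Writing $A=(a_{ji})\in\{0,1\}^{r\times k}$ for the incidence matrix of $H$, so that $a_{ji}=1$ iff $v_i\in e_j$, this reads $P(H)=\{x\in\R^k : Mx\le b\}$ with
$$M=\begin{pmatrix}A\\ I_k\\ -I_k\end{pmatrix},\qquad b=\begin{pmatrix}\mathbf 1\\ \mathbf 1\\ \mathbf 0\end{pmatrix}.$$
Even if $P(H)$ is defined using only the lower bounds $n_i\ge 0$, the upper bounds $n_i\le 1$ are forced since $\bigcup_j e_j=V$: every $v_i$ lies in some $e_j$, whence $n_i\le\sum_{v_\ell\in e_j}n_\ell\le 1$. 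In particular $P(H)\subseteq[0,1]^k$ is a genuine (bounded) polytope and $0\in P(H)$.

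Next I would use the hypothesis that $H$ is unimodular, which by definition means its incidence matrix $A$ is totally unimodular. Adjoining rows equal to $\pm$ standard unit vectors to a totally unimodular matrix preserves total unimodularity (expanding any square submatrix along such a row either reduces its size or exhibits a zero row), so $M$ is totally unimodular; and $b$ is integral. By the Hoffman--Kruskal theorem, every vertex of $\{x:Mx\le b\}$ is then integral, so $P(H)$ is an integer polytope. This is the first assertion.

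It remains to identify the denominator of $\mathrm{Ehr}(P(H),x)$. Since $|e_j|\le k$ for every $j$, the cube $[0,1/k]^k$ satisfies all the defining inequalities, so $[0,1/k]^k\subseteq P(H)$ and $\dim P(H)=k$. Now I would apply Lemma \ref{Ehrhartpoled+1} with $d=k$: the vertex set $\widehat V$ of $P(H)$ consists of integer points, so $\mathrm{den}\,\gamma=1$ for each $\gamma\in\widehat V$ and $T(x)=\prod_{\gamma\in\widehat V}(1-x^{\mathrm{den}\,\gamma})=(1-x)^{|\widehat V|}$. Hence, written in lowest terms, the denominator of $\mathrm{Ehr}(P(H),x)$ divides $(1-x)^{|\widehat V|}$, so $x=1$ is its only pole, and by Lemma \ref{Ehrhartpoled+1} this pole has order exactly $d+1=k+1$; therefore the denominator is $(1-x)^{k+1}$. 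Equivalently, Ehrhart's theorem \cite{Ehrhart62} applied to the integer polytope $P(H)$ of dimension $k$ yields directly $\mathrm{Ehr}(P(H),x)=h^{*}(x)/(1-x)^{k+1}$.

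The only genuine obstacle is the first move: verifying that the combinatorial hypothesis ``$H$ is a unimodular hypergraph'' is exactly what makes $M$ totally unimodular, so that the Hoffman--Kruskal characterization of integral polyhedra applies; everything afterwards is bookkeeping. (Connectedness of $H$ is not used in this theorem and is assumed only for consistency with the companion results on palindromicity.)
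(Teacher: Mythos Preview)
Your proof is correct and follows essentially the same approach as the paper: both arguments rest on the classical fact that a polyhedron $\{x: Ax\le b,\ x\ge 0\}$ with $A$ totally unimodular and $b$ integral has only integral vertices, and then use $\dim P(H)=k$ to pin down the denominator. The paper's proof is much terser (it simply invokes this fact as ``well known,'' citing \cite{Baldoni04}, and asserts the dimension without argument), whereas you name Hoffman--Kruskal explicitly, verify that adjoining $\pm I_k$ preserves total unimodularity, and justify full-dimensionality via the cube $[0,1/k]^k$; these are welcome details rather than a different method.
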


\begin{thm}\label{UniformHyperGr}
Let $H=(V,E^*)$ be a finite simple connected hypergraph with vertex set $V=\{v_1,v_2,\ldots, v_k\}$ and hyperedge sequence $E^*=(e_1,e_2,\ldots,e_r)$ such that $\cup_{i=1}^r e_i=V$. If $H$ is $s$-uniform, then $\mathrm{Ehr}(P(H),x)$ is a rational function that, when written in lowest terms, takes the form $\frac{M(x)}{Q(x)}$, where $Q(x)$ is a polynomial and $M(x)$ is a symmetric polynomial of degree $\deg Q(x) - (s+1)$.
\end{thm}

This paper is organized as follows.
In Section 2, we prove Conjectures \ref{ConjeConnGrap1} and \ref{ConjeCircularGrap3} using Stanley's reciprocity theorem.
In Section 3, we establish Theorems \ref{UnimodularHyperG} and \ref{UniformHyperGr}.

\section{Graph Polytope}
To establish our results, we introduce Stanley's reciprocity theorem \cite[Theorem 4.1]{StanleyMagic73}.
Let $\Phi$ be an $r \times m$ integer matrix of rank $r$, and define $\textbf{nul}(\Phi)$ as the solution space of the homogeneous linear Diophantine system $\Phi \mathbf{x} = \mathbf{0}$, that is, $\textbf{nul}(\Phi) = \{\boldsymbol{\alpha} \in \mathbb{R}^m \mid \Phi \boldsymbol{\alpha} = 0\}$.
The set $\textbf{nul}(\Phi) \cap \mathbb{N}^m$ forms an additive semigroup with identity.

We define the generating functions $F(\mathbf{x}) = F(x_1, x_2, \ldots, x_m)$ and $\overline{F}(\mathbf{x}) = \overline{F}(x_1, x_2, \ldots, x_m)$ by
\[
F(\mathbf{x}) = \sum_{(\alpha_1, \ldots, \alpha_m) \in \textbf{nul}(\Phi) \cap \mathbb{N}^m} x_1^{\alpha_1} x_2^{\alpha_2} \cdots x_m^{\alpha_m}
\]
and
\[
\overline{F}(\mathbf{x}) = \sum_{(\alpha_1, \ldots, \alpha_m) \in \textbf{nul}(\Phi) \cap \mathbb{P}^m} x_1^{\alpha_1} x_2^{\alpha_2} \cdots x_m^{\alpha_m}.
\]

\begin{thm}[Stanley's reciprocity theorem, \cite{StanleyMagic73}]\label{StanleyReciprocity}
Let $\Phi$ be an $r \times m$ integer matrix of full rank $r$. If there exists a vector $\boldsymbol{\alpha} \in \mathbf{nul}(\Phi) \cap \mathbb{P}^m$, then $F(\mathbf{x})$ and $\overline{F}(\mathbf{x})$ are rational functions in the variables $x_1, \dots, x_m$ satisfying
\[
\overline{F}(x_1, x_2, \ldots, x_m) = (-1)^{m-r} F(x_1^{-1}, x_2^{-1}, \ldots, x_m^{-1}).
\]
\end{thm}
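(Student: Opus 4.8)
The plan is to reinterpret both generating functions as integer-point transforms of a rational cone and to prove the reciprocity first for a single simplicial cone, then extend it by a triangulation argument. First I would observe that, with the multi-index convention $\mathbf{x}^{v}=\prod_i x_i^{v_i}$, the $\N$-solution set $\textbf{nul}(\Phi)\cap\N^m$ is exactly the set of lattice points of the rational polyhedral cone
$$C=\textbf{nul}(\Phi)\cap\R_{\geq 0}^m,$$
while the $\P$-solution set $\textbf{nul}(\Phi)\cap\P^m$ is the set of lattice points in the relative interior $C^{\circ}$. Because $C\subseteq\R_{\geq 0}^m$, the cone $C$ is pointed, and since $\mathrm{rank}\,\Phi=r$ the nullspace has dimension $d:=m-r$; the hypothesis that some $\alpha\in\textbf{nul}(\Phi)\cap\P^m$ exists guarantees $C^{\circ}\neq\emptyset$, so $C$ is full-dimensional inside $\textbf{nul}(\Phi)$, of dimension exactly $d$, and $\overline{F}\neq 0$. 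In this language the theorem becomes the assertion that the integer-point transforms $F=\sigma_C$ and $\overline{F}=\sigma_{C^{\circ}}$ are rational and satisfy $\sigma_C(1/x_1,\ldots,1/x_m)=(-1)^{d}\,\sigma_{C^{\circ}}(x_1,\ldots,x_m)$.

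Next I would settle the simplicial case, which is the heart of the argument. Triangulating $C$ into simplicial subcones using only its extreme rays, each piece $K$ is spanned by linearly independent primitive integer vectors $w_1,\ldots,w_d$, and every lattice point of $K$ is uniquely $\beta+\sum_j n_j w_j$ with $\beta$ in the half-open fundamental parallelepiped $\Pi=\{\sum_j t_j w_j:0\leq t_j<1\}$ and $n_j\in\N$. This gives the explicit rational form
$$\sigma_K(\mathbf{x})=\frac{\sum_{\beta\in\Pi\cap\Z^m}\mathbf{x}^{\beta}}{\prod_{j=1}^{d}\bigl(1-\mathbf{x}^{w_j}\bigr)},$$
so $F$ and $\overline{F}$ will be rational once reassembled. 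Substituting $\mathbf{x}\mapsto 1/\mathbf{x}$ and using $1-\mathbf{x}^{-w_j}=-\mathbf{x}^{-w_j}\bigl(1-\mathbf{x}^{w_j}\bigr)$ produces a global sign $(-1)^{d}$ together with a shift by $\sum_j w_j$ in the numerator exponents; the involution $\beta\mapsto\bigl(\sum_j w_j\bigr)-\beta$ then carries $\Pi\cap\Z^m$ bijectively onto the lattice points of the opposite half-open parallelepiped $\{\sum_j s_j w_j:0<s_j\leq 1\}$, which are precisely the representatives of $K^{\circ}\cap\Z^m$ modulo the generators. A direct computation thus yields the simplicial reciprocity $\sigma_K(1/\mathbf{x})=(-1)^{d}\,\sigma_{K^{\circ}}(\mathbf{x})$.

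Finally I would pass from the simplicial pieces to $C$ itself. Rather than track shared faces by inclusion–exclusion, I would use a half-open triangulation, decomposing $C$ as a disjoint union of half-open simplicial cones $\widetilde{K}_1,\ldots,\widetilde{K}_t$ for which $\sigma_C=\sum_i\sigma_{\widetilde{K}_i}$ holds with no correction terms, and decomposing $C^{\circ}$ compatibly; since the integer-point transform is a valuation on the algebra of rational cones and the simplicial reciprocity respects this additivity, the identity $\sigma_C(1/\mathbf{x})=(-1)^{d}\,\sigma_{C^{\circ}}(\mathbf{x})$, equivalently $\overline{F}(\mathbf{x})=(-1)^{m-r}F(1/\mathbf{x})$, follows. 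The hard part will be exactly this reassembly: arranging the half-open decompositions of $C$ and of its relative interior so that boundary contributions cancel consistently under the operator $\mathbf{x}\mapsto 1/\mathbf{x}$, and justifying that this substitution is a genuine identity of rational functions and not merely of formal power series. Verifying that the reciprocity operator is well defined on the valuation algebra generated by the simplicial cones—so that the per-piece identity can be summed—is where the real work lies.
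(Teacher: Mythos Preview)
The paper does not prove this statement at all: Theorem~\ref{StanleyReciprocity} is quoted from Stanley's 1973 paper and invoked as a black box, with the reader referred to \cite{StanleyMagic73,StanleyReci74,XinReci07} for proofs. So there is no ``paper's own proof'' to compare against.

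That said, your proposal is essentially the standard modern proof of Stanley reciprocity, the version one finds for instance in Beck--Robins \cite{BeckRobins} (which the paper also cites). The identification of $\textbf{nul}(\Phi)\cap\N^m$ and $\textbf{nul}(\Phi)\cap\P^m$ with the lattice points of a pointed rational cone $C$ of dimension $d=m-r$ and of its relative interior is correct, as is the use of the hypothesis $\alpha\in\textbf{nul}(\Phi)\cap\P^m$ to ensure $C$ is full-dimensional in the nullspace. The simplicial step via the half-open parallelepiped and the involution $\beta\mapsto\sum_j w_j-\beta$ is the right engine, and your acknowledgement that the reassembly via half-open triangulations (or, equivalently, the valuation property of integer-point transforms) is where the genuine work lies is accurate. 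Nothing in the outline is wrong; filling in the reassembly carefully is exactly the content of, e.g., the Brion--Vergne or Beck--Robins treatments. Stanley's original 1973 argument is organised somewhat differently (it leans more on the algebra of the semigroup ring and Hilbert series), but the two routes are equivalent in substance.
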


For further references on this reciprocity theorem, we refer to \cite{StanleyReci74,XinReci07}.
Regarding graph polytopes $P(G)$, B\'ona, Ju, and Yoshida \cite{BonaJu07} established the following result.

\begin{thm}\cite{BonaJu07}\label{GraphPolyBonaJu}
Let $G = (V, E)$ be a finite simple connected graph with $|V| = k$. Then there exists a nonnegative integer $s$ such that the Ehrhart series of the graph polytope $P(G)$ satisfies
\begin{align}\label{Ehrhart-Series-PG}
\mathrm{Ehr}(P(G), x) = \frac{H(x)}{(1 - x^2)^s (1 - x)^{k + 1 - s}},
\end{align}
where $H(x)$ is a polynomial of degree at most $k + s$. Moreover, if $G$ is bipartite, then $s = 0$ and $H(x)$ is a symmetric polynomial of degree at most $k$.
\end{thm}

We now establish the following result.
\begin{thm}\label{GraphPolytSymmetric}
Let $G=(V,E)$ be a finite simple connected graph with $|V|=k$.
The Ehrhart series of $P(G)$ is given by Equation \eqref{Ehrhart-Series-PG}.
Then its numerator $H(x)$ is a symmetric polynomial of degree $k+s-2$.
Furthermore, if $G$ is bipartite, then $H(x)$ is symmetric polynomial of degree $k-2$.
\end{thm}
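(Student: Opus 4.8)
The plan is to combine the structural form of the Ehrhart series supplied by Theorem~\ref{GraphPolyBonaJu} with Stanley's Reciprocity Theorem~\ref{StanleyReciprocity} and the degree bound of Lemma~\ref{Ehrhartpoled+1}, extracting a functional equation for $\mathrm{Ehr}(P(G),x)$ that forces $H(x)$ to be palindromic. First I would encode $W(G,n)$ as the $\mathbb{N}$-points of a homogeneous Diophantine system: introduce a slack variable $y_i$ for the box inequality $n_i\le n$ at each vertex and a slack variable $z_{ij}$ for each edge inequality $n_i+n_j\le n$, plus a homogenizing variable $t$ playing the role of $n$. Setting up the matrix $\Phi$ whose null space (intersected with $\mathbb{N}$) records exactly the tuples $(n_1,\dots,n_k,\,y_1,\dots,y_k,\,z_{ij},\,t)$, the generating function $F$ specializing all variables except $t$ to $1$ becomes $\sum_{n\ge0}\#W(G,n)\,x^n=\mathrm{Ehr}(P(G),x)$. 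Here I would need $\mathrm{rank}\,\Phi=r$ and the number of columns $m$, so that $m-r$ is computed explicitly; since $G$ is connected, one can check $\Phi$ has full row rank and $m-r$ equals $k+2$ (the $k$ weights plus the $t$ plus one, or whatever the count yields), and the parity of $m-r$ will be responsible for the sign in the reciprocity relation.

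The key step is then to apply Theorem~\ref{StanleyReciprocity}: $\overline{F}(x^{-1})=(-1)^{m-r}F(x)$ after the same specialization, where $\overline{F}$ counts strictly positive solutions. By a standard interior-point argument (Ehrhart--Macdonald reciprocity in disguise, but here obtained directly from Stanley), the strictly-positive count is $\pm\,\mathrm{ehr}(P(G)^{\circ},n)$, and because the facet description of $P(G)$ is exactly the one given, $n P(G)^{\circ}\cap\mathbb{Z}^k$ is counted by a shift of the same series. Plugging $\mathrm{Ehr}(P(G),x)=H(x)\big/\big((1-x^2)^s(1-x)^{k+1-s}\big)$ into the functional equation, the denominator transforms under $x\mapsto x^{-1}$ by an explicit monomial factor, and matching numerators yields $x^{N}H(x^{-1})=H(x)$ for the appropriate $N$. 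Finally, I would pin down $N$ — equivalently $\deg H$ — using Lemma~\ref{Ehrhartpoled+1}: the order of the pole at $x=1$ of $\mathrm{Ehr}(P(G),x)$ in lowest terms is exactly $\dim P(G)+1=k+1$, so $(1-x)$ divides $H(x)$ to order exactly $s$; combined with the symmetry this forces $\deg H=k+s-2$ rather than the a priori bound $k+s$. The bipartite case is then immediate: Theorem~\ref{GraphPolyBonaJu} gives $s=0$, so $\deg H=k-2$.

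The main obstacle I anticipate is the bookkeeping in the first step: writing down $\Phi$ so that its null space genuinely matches $W(G,n)$ for all $n$ simultaneously (this requires the homogenization to be done carefully so that the coefficient of $x^n$ in $F$ is $\#W(G,n)$ and not some twisted count), verifying that $\Phi$ has full row rank, and — most delicately — computing $m-r$ correctly so that the sign $(-1)^{m-r}$ comes out right. A related subtlety is that Theorem~\ref{StanleyReciprocity} requires the existence of a strictly positive integer point in $\mathbf{nul}(\Phi)$; one must exhibit one, which amounts to finding a weighting with all slacks strictly positive, e.g. all $n_i$ equal to a small positive value with $t$ large, and here connectedness of $G$ (ensuring $\dim P(G)=k$, so the interior is nonempty) is what guarantees this. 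Once the reciprocity identity is in hand, deducing palindromicity of $H$ and its exact degree is routine algebra of rational functions, so I would keep that part brief.
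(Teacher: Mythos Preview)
Your overall strategy---homogenize, apply Stanley's Reciprocity Theorem~\ref{StanleyReciprocity}, and read off a functional equation for $\mathrm{Ehr}(P(G),x)$---is exactly what the paper does. But you have left out the one computation that makes the argument go through, and your proposed workaround via Lemma~\ref{Ehrhartpoled+1} does not recover it.

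The paper introduces only the edge slacks $b_\ell$ (your $z_{ij}$); since $G$ is connected, every vertex lies in some edge and the box constraints $n_i\le n$ are redundant, so your extra $y_i$'s are unnecessary. More importantly, the decisive step is to exhibit an explicit positive solution $\beta=(1,\dots,1,3)\in\textbf{nul}(\Phi)\cap\mathbb{P}^{k+r+1}$ and observe that $\alpha\mapsto\alpha-\beta$ is a \emph{bijection} from $\textbf{nul}(\Phi)\cap\mathbb{P}^{k+r+1}$ onto $\textbf{nul}(\Phi)\cap\mathbb{N}^{k+r+1}$ (the last coordinate works because any edge equation forces $n\ge 3$ once $n_i,n_j,b_\ell\ge 1$). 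This gives $\overline{F}(\mathbf{x})=y_1\cdots y_k\,z_1\cdots z_r\,x^{3}F(\mathbf{x})$, and after specializing and using Theorem~\ref{StanleyReciprocity} (with $m-r=k+1$, not $k+2$) one gets directly
\[
x^{3}\,\mathrm{Ehr}(P(G),x)=(-1)^{k+1}\,\mathrm{Ehr}(P(G),x^{-1}).
\]
Substituting the given form for $\mathrm{Ehr}$ then yields $H(x)=x^{k+s-2}H(x^{-1})$ with no further input: the ``$3$'' is what produces the ``$-2$''. You only say the interior count is ``a shift of the same series'' without ever computing that the shift is by $3$; that number is the entire content of the proof.

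Your fallback---using Lemma~\ref{Ehrhartpoled+1} to pin down $\deg H$---contains an error and would not determine the degree anyway. Since $(1-x^2)^s(1-x)^{k+1-s}=(1+x)^s(1-x)^{k+1}$, the denominator already contributes a pole of order exactly $k+1$ at $x=1$; Lemma~\ref{Ehrhartpoled+1} therefore gives $H(1)\ne 0$, i.e.\ $(1-x)$ divides $H$ to order~$0$, not order~$s$. And knowing $H(1)\ne 0$ together with ``$H$ is palindromic of some degree $N$'' does not determine $N$. The only way to get $N=k+s-2$ is to know the shift in the functional equation, which comes from the explicit $\beta$ (equivalently: from the bijection $n_i\mapsto n_i-1$ between interior lattice points of $nP(G)$ and $W(G,n-3)$).
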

\begin{proof}
Let $|E|=r$. The graph polytope definition yields the homogeneous system:
\begin{align*}
n_i + n_j \leq t \quad &\text{for each edge } e_\ell=v_iv_j \in E, 1 \leq \ell \leq r, \\
\Longleftrightarrow \quad n_i + n_j + b_{\ell} - t = 0 \quad &\text{with } b_{\ell} \geq 0, \ 1 \leq \ell \leq r, \ \text{for } e_\ell=v_iv_j \in E.
\end{align*}
Denote this by $\Phi \boldsymbol{x} = \boldsymbol{0}$, where $\Phi$ is an $r \times (k+r+1)$ matrix of full rank $r$. Define generating functions
\[
F(\boldsymbol{x}) = \sum_{(n_1,\ldots,n_k,b_1,\ldots,b_r,t) \in \mathbf{nul}(\Phi) \cap \mathbb{N}^{k+r+1}} y_1^{n_1} \cdots y_k^{n_k} z_1^{b_1} \cdots z_r^{b_r} x^t
\]
and
\[
\overline{F}(\boldsymbol{x}) = \sum_{(n_1,\ldots,n_k,b_1,\ldots,b_r,t) \in \mathbf{nul}(\Phi) \cap \mathbb{P}^{k+r+1}} y_1^{n_1} \cdots y_k^{n_k} z_1^{b_1} \cdots z_r^{b_r} x^t.
\]

By construction, $\mathrm{Ehr}(P(G),x) = F(\boldsymbol{1},x)$, where $\boldsymbol{1}$ is the all-ones vector. Observe that the vector $\boldsymbol{\beta} = (\boldsymbol{1}, \boldsymbol{1}, 3) \in \mathbf{nul}(\Phi) \cap \mathbb{P}^{k+r+1}$ (where the first $\boldsymbol{1}$ has length $k$, the second length $r$). This implies
\[
\boldsymbol{\alpha} \in \mathbf{nul}(\Phi) \cap \mathbb{P}^{k+r+1} \iff \boldsymbol{\alpha} - \boldsymbol{\beta} \in \mathbf{nul}(\Phi) \cap \mathbb{N}^{k+r+1},
\]
and consequently,
\[
\overline{F}(\boldsymbol{x}) = \left( \prod_{i=1}^k y_i \right) \left( \prod_{j=1}^r z_j \right) x^3 F(\boldsymbol{x}).
\]
Stanley's reciprocity theorem \ref{StanleyReciprocity} gives
\[
x^3 F(\boldsymbol{1},x) = \overline{F}(\boldsymbol{1},x) = (-1)^{k+1} F(\boldsymbol{1},x^{-1}),
\]
which is equivalent to
\[
x^3 \mathrm{Ehr}(P(G),x) = (-1)^{k+1} \mathrm{Ehr}(P(G),x^{-1}).
\]
This implies $H(x) = x^{k+s-2} H(x^{-1})$, proving $H(x)$ is symmetric of degree $k+s-2$. For bipartite $G$, $H(x)$ is symmetric of degree $k-2$ since $s=0$. This completes the proof.
\end{proof}

As an application of Theorem \ref{GraphPolytSymmetric}, we verify Conjecture \ref{ConjeConnGrap1}. For Conjecture \ref{ConjeCircularGrap3}: when $\ell \in \mathbb{P}$ and $k=2\ell$, the result follows directly from Theorem \ref{GraphPolytSymmetric}; when $k=2\ell+1$, the parameter $s=1$ in $\mathrm{Ehr}(P(G),x)$ is confirmed by \cite{XinZhongZhou}. Thus Conjecture \ref{ConjeCircularGrap3} holds.

As a further application, we establish the following result, which was originally conjectured by Bona and Ju~\cite[Section 2.6]{BonaJu07}.

\begin{prop}[First proof in \cite{LeeJu15}]\label{ConjeHyperGrap2}
Let $G$ be the graph of the $d$-dimensional hypercube; that is, its vertex set consists of the $2^d$ points in $\{0,1\}^d$, where two vertices are adjacent if they differ in exactly one coordinate.
Then the numerator polynomial of the Ehrhart series $\mathrm{Ehr}(P(G),x)$ is symmetric and has degree $2^d - 2$.
\end{prop}

Since the $d$-dimensional hypercube is bipartite, this result follows directly from \cite[Theorem 5]{LeeJu15}. Therefore, the first proof of Proposition~\ref{ConjeHyperGrap2} was given by Lee and Ju.

For completeness, we recall that a polynomial $f(x)=a_nx^n+a_{n-1}x^{n-1}+\cdots+a_1x+a_0$ with real coefficients is \emph{unimodal} if there exists an index $0 \leq i \leq n$ satisfying $a_0\leq \cdots \leq a_{i-1}\leq a_i\geq a_{i+1} \geq \cdots \geq a_n$.

We now present an example demonstrating that the numerator polynomial $H(x)$ is not necessarily unimodal. This example appears in \cite[Theorem 2.5]{BonaJu07}.

\begin{exa}
A graph is \emph{complete} if every pair of distinct vertices is adjacent. The complete graph on $k$ vertices is denoted by $G_k$. For small $k$, we compute the Ehrhart series $\mathrm{Ehr}(P(G_k),x)$ using the software \emph{\texttt{LattE}} \cite{DeLoera04} and \emph{\texttt{CTEuclid}} \cite{Xin15}:
{\small
\begin{align*}
\mathrm{Ehr}(P(G_5),x) &= \frac{1+3x+19x^2+14x^3+19x^4+3x^5+x^6}{(1-x)^3(1-x^2)^3}, \\
\mathrm{Ehr}(P(G_6),x) &= \frac{1+4x+48x^2+56x^3+142x^4+56x^5+48x^6+4x^7+x^8}{(1-x)^3(1-x^2)^4}, \\
\mathrm{Ehr}(P(G_7),x) &= \frac{1+5x+109x^2+176x^3+730x^4+478x^5+730x^6+176x^7+109x^8+5x^9+x^{10}}{(1-x)^3(1-x^2)^5}.
\end{align*}}
The numerator polynomials are symmetric for all computed $k$, but fail to be unimodal for $k=5$ and $k=7$.
\end{exa}

For further studies of Ehrhart quasi-polynomials associated with graph polytopes, including linear graphs, circular graphs, star graphs, cubic graphs, and complete bipartite graphs, we refer to \cite{BonaJu07,Ju07,XinZhong23}.

\section{Hypergraph Polytope}\label{Section33}

A \emph{hypergraph} $H = (V, E^* = (e_i)_{i \in I})$ consists of a finite vertex set $V$ and a family of subsets $E^* = (e_i)_{i \in I}$ of $V$ called \emph{hyperedges}, where $I$ is a finite index set. A hyperedge $e \in E^*$ with $|e| = 1$ is a \emph{loop}. The hypergraph $H$ is \emph{without repeated hyperedges} if $i \neq j$ implies $e_i \neq e_j$. It is \emph{simple} if $e_i \subseteq e_j$ implies $i = j$; this condition implies no repeated hyperedges. The hypergraph is \emph{connected} if any two distinct vertices are connected by a path, and \emph{$s$-uniform} if every hyperedge contains exactly $s$ vertices. For a comprehensive introduction, see \cite{Bretto}.

Let $V = \{v_1, v_2, \dots, v_k\}$ and $E^* = (e_1, e_2, \dots, e_r)$ with $\bigcup_{i=1}^r e_i = V$. The \emph{incidence matrix} of $H = (V, E^*)$ is the $r \times k$ matrix $A = (a_{ij})$, where $a_{ij} = 1$ if $v_j \in e_i$, and $a_{ij} = 0$ otherwise. A matrix is \emph{totally unimodular} if every square submatrix has determinant $-1$, $0$, or $1$; consequently, all entries lie in $\{-1, 0, 1\}$. The hypergraph $H$ is \emph{unimodular} if its incidence matrix is totally unimodular.

We now extend the definition of a graph polytope to hypergraphs. The \emph{hypergraph polytope} $P(H)$ for a finite simple hypergraph $H = (V, E^*)$ is defined as
\[
P(H) = \left\{ \mathbf{n} \in \mathbb{R}^k \mid \mathbf{n} \geq \mathbf{0},\  A\cdot \mathbf{n} \leq \mathbf{1} \right\},
\]
where $A$ is the incidence matrix of $H$ and $\mathbf{n} = (n_1, n_2, \dots, n_k)^T$.

Hypergraph polytopes are not necessarily integer polytopes. However, under unimodularity, Theorem \ref{UnimodularHyperG} holds.
\begin{proof}[Proof of Theorem \ref{UnimodularHyperG}]
By definition, the incidence matrix $A$ of a unimodular hypergraph is totally unimodular. When $A$ is totally unimodular and $\mathbf{1}$ is integral, all vertices of the polytope $\{\mathbf{n} \in \mathbb{R}^k \mid \mathbf{n} \geq \mathbf{0},\ A\cdot\mathbf{n} \leq \mathbf{1}\}$ are integral (see \cite[Chapter 19]{Schrijver} or \cite{Baldoni04}). Thus $P(H)$ is an integer polytope. Furthermore, $\dim P(H) = k$, the number of vertices of $H$.
\end{proof}

Let us now prove Theorem~\ref{UniformHyperGr}.
\begin{proof}[Proof of Theorem~\ref{UniformHyperGr}]
By the definition of the hypergraph polytope, we obtain the system:
\begin{align*}
&n_{i_1} + n_{i_2} + \cdots + n_{i_s} \leq t, \quad \text{if  } e_\ell=\{v_{i_1}, v_{i_2}, \ldots, v_{i_s}\} \in E,\quad 1 \leq \ell \leq r, \\
\Longleftrightarrow \quad & n_{i_1} + n_{i_2} + \cdots + n_{i_s} + b_{\ell} - t = 0,\ \text{where } b_{\ell} \geq 0,\ 1 \leq \ell \leq r,\ \text{if }e_\ell=\{v_{i_1}, v_{i_2}, \ldots, v_{i_s}\} \in E.
\end{align*}
Denote this homogeneous linear Diophantine system by $\Phi \mathbf{x} = \mathbf{0}$, where $\Phi$ is an $r \times (k + r + 1)$ matrix of full rank $r$. The generating functions $F(\mathbf{x})$ and $\overline{F}(\mathbf{x})$ are defined as
\[
F(\mathbf{x}) = \sum_{(n_1,\ldots,n_k, b_1,\ldots,b_r,t) \in \mathbf{nul}(\Phi) \cap \; \mathbb{N}^{k+r+1}} y_1^{n_1} \cdots y_k^{n_k} z_1^{b_1} \cdots z_r^{b_r} x^t
\]
and
\[
\overline{F}(\mathbf{x}) = \sum_{(n_1,\ldots,n_k, b_1,\ldots,b_r,t) \in \mathbf{nul}(\Phi) \cap \; \mathbb{P}^{k+r+1}} y_1^{n_1} \cdots y_k^{n_k} z_1^{b_1} \cdots z_r^{b_r} x^t.
\]

Note that $\mathrm{Ehr}(P(H), x) = F(\mathbf{1}, x)$. Observe that the vector $\boldsymbol{\beta} = (\boldsymbol{1}, \boldsymbol{1}, s+1)$ lies in $\mathbf{nul}(\Phi) \cap \mathbb{P}^{k+r+1}$ (where the first $\boldsymbol{1}$ has length $k$, the second length $r$). Thus,
\[
\boldsymbol{\alpha} \in \mathbf{nul}(\Phi) \cap \mathbb{P}^{k+r+1} \quad \Longleftrightarrow \quad \boldsymbol{\alpha} - \boldsymbol{\beta} \in \mathbf{nul}(\Phi) \cap \mathbb{N}^{k+r+1}.
\]
Consequently,
\[
\overline{F}(\mathbf{x}) = y_1 \cdots y_k z_1 \cdots z_r x^{s+1} F(\mathbf{x}).
\]
By Stanley's reciprocity theorem (Theorem~\ref{StanleyReciprocity}),
\[
x^{s+1} F(\mathbf{1}, x) = \overline{F}(\mathbf{1}, x) = (-1)^{k+1} F(\mathbf{1}, x^{-1}),
\]
that is,
\begin{align}\label{EHRPGMH}
x^{s+1} \mathrm{Ehr}(P(H), x) = (-1)^{k+1} \mathrm{Ehr}(P(H), x^{-1}).
\end{align}

Equation~\eqref{EHRPGMH} is therefore equivalent to
\[
M(x) \cdot x^{s+1} = (-1)^{k+1} M(x^{-1}) \frac{Q(x)}{Q(x^{-1})}.
\]
By Lemma~\ref{Ehrhartpoled+1} and $\dim P(H)=k$, we have $M(x) = x^{\deg Q(x) - s - 1} M(x^{-1})$.
Thus, the numerator $M(x)$ is a symmetric polynomial of degree $\deg Q(x) - s - 1$.
This completes the proof.
\end{proof}

Combining Theorems~\ref{UnimodularHyperG} and~\ref{UniformHyperGr}, we immediately obtain the following corollary.
\begin{cor}\label{cor:ehrhart-series}
Let $H = (V, E^*)$ be a finite simple connected hypergraph with vertex set $V = \{v_1, \dots, v_k\}$ and edge set $E^* = (e_1, \dots, e_r)$ such that $\bigcup_{i=1}^r e_i = V$. If $H$ is $s$-uniform and unimodular, then the Ehrhart series of $P(H)$ has the rational form
\[
\mathrm{Ehr}(P(H), x) = \frac{M(x)}{(1-x)^{k+1}},
\]
where $M(x)$ is a symmetric polynomial of degree $k - s$.
\end{cor}

In the following examples, we compute the Ehrhart series using the software packages \texttt{LattE}~\cite{DeLoera04} and \texttt{CTEuclid}~\cite{Xin15}; the results agree.

\begin{figure}[htp]
\centering
\includegraphics[width=8cm,height=3cm]{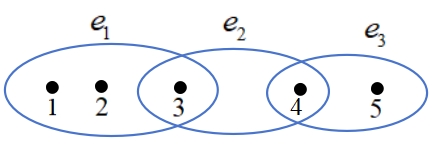}
\caption{A unimodular hypergraph with $3$ hyperedges}
\label{Figure1}
\end{figure}

\begin{exa}
Consider the hypergraph $H = (V, E^*)$ where $E^* = (e_1, e_2, e_3)$, as illustrated in Figure~\ref{Figure1}. The hyperedges are given by $e_1 = \{1,2,3\}$, $e_2 = \{3,4\}$, and $e_3 = \{4,5\}$.
The incidence matrix of $H$ is the $3 \times 5$ matrix
\[
A = \begin{pmatrix}
1 & 1 & 1 & 0 & 0 \\
0 & 0 & 1 & 1 & 0 \\
0 & 0 & 0 & 1 & 1
\end{pmatrix}.
\]
This matrix is totally unimodular; hence $H$ is unimodular. Note that $H$ is not uniform. Furthermore, the Ehrhart series of $P(H)$ is
\[
\mathrm{Ehr}(P(H),x) = \frac{1+6x+4x^2}{(1-x)^6}.
\]
\end{exa}

\begin{figure}[htp]
\centering
\includegraphics[width=8cm,height=7cm]{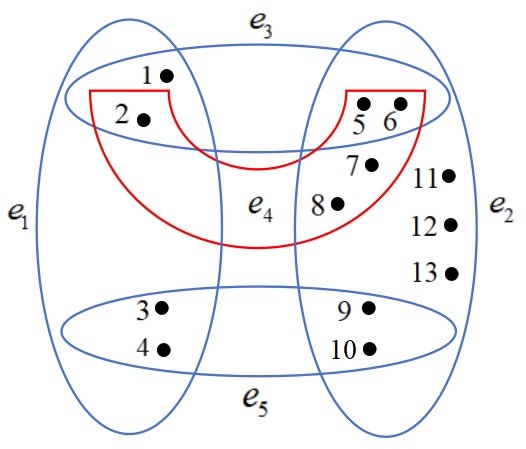}
\caption{A unimodular hypergraph with $5$ hyperedges}
\label{Figure2}
\end{figure}

\begin{exa}
Consider the hypergraph $H = (V, E^* = (e_1, \dots, e_5))$ shown in Figure~\ref{Figure2}, where
\begin{align*}
e_1 &= \{1,2,3,4\}, & e_2 &= \{5,6,7,8,9,10,11,12,13\}, \\
e_3 &= \{1,2,5,6\}, & e_4 &= \{2,5,6,7,8\}, \qquad\qquad
e_5 = \{3,4,9,10\}.
\end{align*}
The incidence matrix of $H$ is the $5 \times 13$ matrix
\[
A=\left(
\begin{array}{ccccccccccccc}
1 & 1 & 1 & 1 & 0 & 0 & 0 & 0 & 0 & 0 & 0 & 0 & 0  \\
0 & 0 & 0 & 0 & 1 & 1 & 1 & 1 & 1 & 1 & 1 & 1 & 1  \\
1 & 1 & 0 & 0 & 1 & 1 & 0 & 0 & 0 & 0 & 0 & 0 & 0  \\
0 & 1 & 0 & 0 & 1 & 1 & 1 & 1 & 0 & 0 & 0 & 0 & 0  \\
0 & 0 & 1 & 1 & 0 & 0 & 0 & 0 & 1 & 1 & 0 & 0 & 0  \\
\end{array}
\right).
\]
This matrix is totally unimodular, confirming that $H$ is unimodular. Note that $H$ is not uniform. The Ehrhart series of the polytope $P(H)$ is
\[
\mathrm{Ehr}(P(H),x) = \frac{1+26x+167x^2+300x^3+125x^4}{(1-x)^{14}}.
\]
\end{exa}

\begin{figure}[htp]
\centering
\includegraphics[width=7cm,height=5cm]{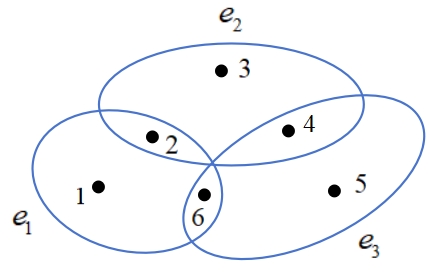}
\caption{A $3$-uniform hypergraph with $3$ hyperedges}
\label{Figure3}
\end{figure}

\begin{exa}
Let $H = (V, E^* = (e_1, e_2, e_3))$ be the hypergraph shown in Figure~\ref{Figure3}, where $e_1 = \{1,2,6\}$, $e_2 = \{2,3,4\}$, and $e_3 = \{4,5,6\}$.
The incidence matrix of $H$ is the $3 \times 6$ matrix
\[
A = \begin{pmatrix}
1 & 1 & 0 & 0 & 0 & 1 \\
0 & 1 & 1 & 1 & 0 & 0 \\
0 & 0 & 0 & 1 & 1 & 1
\end{pmatrix}.
\]
Clearly, $H$ is $3$-uniform. However, $A$ is not totally unimodular. The Ehrhart series of $P(H)$ is
\[
\mathrm{Ehr}(P(H), x) = \frac{1 + 8x + 15x^2 + 8x^3 + x^4}{(1-x)^6 (1-x^2)}.
\]
\end{exa}

\begin{figure}[htp]
\centering
\includegraphics[width=10cm, height=6cm]{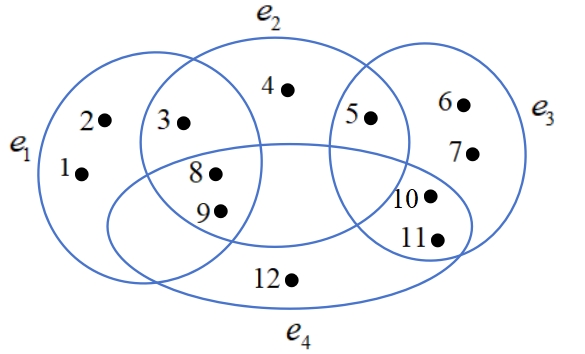}
\caption{A $5$-uniform hypergraph with $4$ hyperedges}
\label{Figure4}
\end{figure}

\begin{exa}
Consider the hypergraph $H = (V, E^* = (e_1, e_2, e_3, e_4))$ shown in Figure~\ref{Figure4}, where $e_1 = \{1,2,3,8,9\}$, $e_2 = \{3,4,5,8,9\}$, $e_3 = \{5,6,7,10,11\}$, and $e_4 = \{8,9,10,11,12\}$.
The incidence matrix of $H$ is the $4 \times 12$ matrix
\[
A=\left(
\begin{array}{cccccccccccc}
1 & 1 & 1 & 0 & 0 & 0 & 0 & 1 & 1 & 0 & 0 & 0 \\
0 & 0 & 1 & 1 & 1 & 0 & 0 & 1 & 1 & 0 & 0 & 0 \\
0 & 0 & 0 & 0 & 1 & 1 & 1 & 0 & 0 & 1 & 1 & 0 \\
0 & 0 & 0 & 0 & 0 & 0 & 0 & 1 & 1 & 1 & 1 & 1
\end{array}
\right).
\]
Clearly, $H$ is $5$-uniform. A straightforward verification shows that $A$ is not totally unimodular. The Ehrhart series of $P(H)$ is
\begin{align*}
\mathrm{Ehr}(P(H),x)&=\frac{1+60x+835x^2+4908x^3+15658x^4+30334x^5+37628x^6}{(1-x)^{13}(1+x)^5}
\\ &\quad\quad + \frac{30334x^7+15658x^8+4908x^9+835x^{10}+60x^{11}+x^{12}}{(1-x)^{13}(1+x)^5}.
\end{align*}
\end{exa}

\section{Concluding Remark}

In this paper, we establish Conjectures \ref{ConjeConnGrap1} and \ref{ConjeCircularGrap3} through an application of Stanley's reciprocity theorem.
Recall that a polytope $\mathcal{P}$ is \emph{reflexive} if it is integral and admits a representation
\[
\mathcal{P} = \{\mathbf{x} \in \mathbb{R}^d \mid A\mathbf{x} \leq \mathbf{1}\},
\]
where $A$ is an integer matrix.
For such polytopes, the symmetry of the $h^*$-polynomial is characterized by Hibi's palindromic theorem \cite[Theorem 4.6]{BeckRobins}.
In our setting, the simple $s$-uniform hypergraph polytope $P(H)$ constitutes a rational polytope.
Theorem~\ref{UniformHyperGr} characterizes the symmetry of the numerator polynomial in the Ehrhart series $\mathrm{Ehr}(P(H), x)$.

{\small
\textbf{Acknowledgments:}
The author would like to thank the anonymous referee for valuable suggestions that improved the presentation.
He is grateful to Yueming Zhong for insightful discussions and to his advisor Guoce Xin for guidance and support.
This work was partially supported by the National Natural Science Foundation of China [12071311] and [12571355].
}

%
%


\begin{thebibliography}{99}

\bibitem{Baldoni04} W. Baldoni-Silva, J. A. De Loera, and M. Vergne, \emph{Counting integer flows in networks}, Found. Comput. Math. 4 (2004), 277--314.

\bibitem{BeckRobins} M. Beck and S. Robins, \emph{Computing the continuous discretely, in: Integer-Point Enumeration in Polyhedra}, second edition, Undergraduate Texts in Mathematics. Springer, New York, (2015).


\bibitem{BonaJuAnn06} M. B\'ona and H.-K. Ju, \emph{Enumerating solutions of a system of linear inequalities related to magic squares}, Ann. Comb. 10 (2006), 179--191.


\bibitem{BonaJu07} M. B\'ona, H.-K. Ju, and R. Yoshida, \emph{On the enumeration of certain weighted graphs}, Discrete Appl. Math. 155 (2007), 1481--1496.


\bibitem{Bretto} A. Bretto, \emph{Hypergraph Theory}, Mathematical Engineering. Springer, New York, (2013).

\bibitem{DeLoera04} J. A. De Loera, R. Hemmecke, J. Tauzer, and R. Yoshida, \emph{Effective lattice point counting in rational convex polytopes}, J. Symbolic Comput. 38 (2004), 1273--1302.


\bibitem{Ehrhart62} E. Ehrhart, \emph{Sur les polyh\'edres rationnels homoth\'etiques \'a $n$ dimensions}, C. R. Acad Sci. Paris. 254 (1962), 616--618.


\bibitem{JuKim18} H.-K. Ju, S. Kim, and D. Lee, \emph{Different volume computational methods of graph polytopes}, Bull. Korean Math. Soc. 55 (2018), 1405--1417.

\bibitem{Ju07} H.-K. Ju, \emph{Enumeration of weighted complete graphs}, J. Korean Math. Soc. 44 (2007), 1351--1362.


\bibitem{LeeJu15} D. Lee and H.-K. Ju, \emph{An extension of Hibi's palindromic theorem}, arXiv. 1503.05658, (2015).

\bibitem{Schrijver} A. Schrijver, \emph{Theory of Linear and Integer Programming}, Series in Discrete Mathematics and Optimization, Wiley, New York, (1982).


\bibitem{StanleyMagic73} R. P. Stanley, \emph{Linear homogeneous diophantine equations and magic labelings of graphs}, Duke Math. J. 40 (1973), 607--632.


\bibitem{StanleyReci74} R. P. Stanley, \emph{Combinatorial reciprocity theorems}, Adv. in Math. 14 (1974), 194--253.

\bibitem{StanleyADM80} R. P. Stanley, \emph{Decompositions of rational convex polytopes}, Ann. Discrete Math. 6 (1980), 333--342.

\bibitem{RP.Stanley} R. P. Stanley, \emph{Enumerative Combinatorics (volume 1)},
    Cambridge Studies in Advanced Mathematics, vol. 49, Cambridge University Press,  (2012).


\bibitem{XinReci07} G. Xin, \emph{A generalization of Stanley's monster reciprocity theorem}, J. Combin Theory, Series A. 114(8) (2007), 1526--1544.

\bibitem{Xin15} G. Xin, \emph{A Euclid style algorithm for MacMahon's partition analysis}, J. Combin Theory, Series A. 131 (2015), 32--60.

\bibitem{XinZhong23} G. Xin and Y. Zhong \emph{Proving some conjectures on Kekul\'e numbers for certain benzenoids by using Chebyshev polynomials}, Adv. in Appl. Math. 145 (2023), No. 102479.

\bibitem{XinZhongZhou} G. Xin, Y. Zhong, and Y. Zhou, \emph{On magic labellings of pseudo-cycle graphs and pseudo-line graphs}, in preparation.


\bibitem{Ziegler} G. M. Ziegler, \emph{Lectures on Polytopes}, Grad. Texts Math. 152, Springer-Verlag, New York (1995).

\end{thebibliography}
\end{document}